\newcommand \blue[1] {\textcolor{blue}{#1}}
\newcommand{\commentout}[1]{}
\newcommand{\R}{\mathbb{R}}
\newcommand {\al} {\alpha}
\newcommand {\eps}  {\varepsilon}
\newcommand {\sg} {\sigma}
\newcommand {\Chi} {{\bf \raise 2pt \hbox{$\chi$}} }
\newcommand {\f}   {\frac}
\newcommand {\p}   {\partial}
\newcommand{\dis}{\displaystyle}
\newtheorem{theorem}{Theorem}
\newtheorem{lemma}[theorem]{Lemma}
\newtheorem{remark}[theorem]{Remark}
\newtheorem{proposition}[theorem]{Proposition}
\newcommand{\qed}{{ \hfill
                       {\unskip\kern 6pt\penalty 500 \raise -2pt\hbox{\vrule\vbox to 6pt{\hrule width 6pt
                       \vfill\hrule}\vrule} \par}   }}
\title{Deriving sub-diffusion equations}
\author{
Beno\^ \i t Perthame\thanks{Sorbonne Universit{\'e}, CNRS, Universit\'{e} de Paris, Inria, Laboratoire Jacques-Louis Lions UMR7598, F-75005 Paris. 
Email : Benoit.Perthame@sorbonne-universite.fr}
\and Min Tang\thanks{ School of Mathematical Sciences and Institute of Natural Sciences, MOE-LSC, CMA-Shanghai, Shanghai Jiao Tong University, China. Email: tangmin@sjtu.edu.cn}
}
\date{\today}
\begin{document}
\maketitle
\pagestyle{plain}
\pagenumbering{arabic}

\begin{abstract} 
Sub-diffusion equations are used in a large range of applications including fluids, plasma physics and biology. Their mathematical analysis is advanced even if a much larger literature addresses super-diffusions. 
The goal of this paper is to provide the microscopic mechanism and rigorous derivation of sub-diffusions when the waiting time distribution of particles follows an age-structured equation and jumps occur at each renewal. The major difficulty to recover sub-diffusions, unlike normal diffusions, is that the assumption of long waiting time implies lack of integrability for the age equilibrium. This prevents to establish strong a priori estimates. Here, the Laplace transform plays the role that Fourier transform plays for the more traditional case of fast diffusions.
\end{abstract} 
\vskip .7cm

\noindent{\makebox[1in]\hrulefill}\newline
2020 \textit{Mathematics Subject Classification.} 35R09, 35B40, 44A10,  
\newline\textit{Keywords and phrases.} Structured population dynamics; Multiscale analysis;  Caputo derivatives;  
%
\section{Introduction}
\label{sec:intro}

Sub-diffusion, characterized by a mean squared displacement (MSD) scaling as $\langle x^2(t) \rangle \propto t^\alpha$ with $0 < \alpha < 1$, has been observed across diverse physical and biological systems \cite{METZLER20001,sub3,sub4,sub7}. In living cells, sub-diffusive transport is prevalent: lipid granules in fission yeast exhibit anomalous diffusion in the cytoplasm \cite{sub4}, insulin granules in MIN6 cells show heavy-tailed waiting time distributions during microtubule binding/unbinding \cite{sub8}, and hydration water on protein surfaces undergoes sub-diffusion due to trapping in heterogeneous potential landscapes \cite{Tan2018, Li2022}. For a comprehensive review of sub-diffusion in natural systems, see \cite{Meroz2015}.

Mathematically, sub-diffusion is often modeled using fractional calculus, introducing nonlocal time derivatives to account for memory effects. The Caputo \textit{Riemann–Liouville} fractional derivative, defined as  
\[
{}_{0}^{RL}D_{t}^{\alpha}f(t) =\partial_t\int_0^t\frac{f(\tau)}{(t-\tau)^\al } d\tau,
\]
naturally arises in sub-diffusion equations such as  
\[
{}_{0}^{RL}D_t^\alpha u - \text{div}(\kappa(u) \nabla u) = f,
\]  
where $\alpha$ governs the degree of sub-diffusion \cite{Caputo1999}. This formalism captures memory through the power-law kernel $(t - \tau)^{-\alpha}$.

A prevalent microscopic model for sub-diffusion is the continuous-time random walk (CTRW), where particles alternate between jumps and trapping events with stochastic dwell times. When the trapping time distribution follows a power law decay $\psi(t) \sim t^{-(1+\alpha)}$ ($0 < \alpha < 1$) for $t\gg 1$, the ensemble-averaged MSD exhibits sublinear scaling $\langle x^2(t) \rangle \propto t^\alpha$. This non-ergodic behavior arises because longer observations uncover deeper traps, leading to a breakdown of ergodicity \cite{METZLER20001}.

Prior works have linked CTRW models to sub-diffusion equations. For instance, \cite{PhysRevE.61.132} showed equivalence between their Green's functions, while \cite{SIAP2015} derived the fractional Fokker-Planck equation from a generalized master equation for CTRW, connecting power-law trapping kernels to fractional time derivatives. In CTRW models, the power-law trapping distribution is typically assumed, which implies that the trapped particles are always in equilibrium. However, a fundamental question remains: \textit{What microscopic mechanisms generate power-law trapping distributions?}

In this paper, we address this gap by proposing an \textbf{age-structured model} for particle trapping dynamics. Age-structured models, widely used in demography, ecology, epidemiology \cite{thieme,DGMS_book,DiH}, track populations partitioned by age $a$ (here, trap residence time). We model the probability density $u(a, t)$ of particles trapped for time $a$ at time $t$ using the age structured (McKendrick-Von Foerster) equation~\cite{IannelliBook}:  
\begin{equation}\label{intro:age}
\frac{\partial u(a, t)}{\partial t} + \frac{\partial u(a, t)}{\partial a} = -d(a)u(a, t),
\end{equation}
where $d(a)$ is the age-dependent escape rate. When $d(a) = \frac{\alpha}{1 + a}$ ($0 < \alpha < 1$), the escape probability decreases with trap residence time, leading to a power-law waiting time distribution $\Psi(a) \sim a^{\alpha - 1}$ for large values of~$a$. 

By coupling this age-structured trapping model with spatial jumps, we rigorously derive the sub-diffusion equation and identify scaling conditions under which the microscopic and macroscopic descriptions coincide. Notably, we establish a correspondence between the initial conditions of the age-structured model (e.g., initial trap residence times) and those of the fractional sub-diffusion equation, addressing a critical gap in the literature. This connection provides mechanistic insights into sub-diffusive behavior in complex systems. An age-structured kinetic model was used in \cite{EstradaPainter_2019} to derive a space-time fractional diffusion equation, but the derivation is formal, and the initial condition was not identified.

On the one hand, the mathematical literature on problems involving \textbf{Caputo derivatives} and partial differential equations is extensive, spanning topics from the porous medium equation~\cite{Allen2017} to mathematical biology and medicine~\cite{BerryLG2016,FEB_2024} (see also the references therein). Sobolev regularity in $L^p$ spaces has also been explored (cf.~\cite{DongKim_2020}), quasi-linear problems have been investigated~\cite{zacher_2012}, and Besov regularity has been established. However, these works do not address the derivation of such equations from finer-scale mechanisms.  
On the other hand, equations combining age-dependent escape rates with kinetic equations have been derived from particle scattering studies~\cite{CagGolse_2010, MS_2010}. Various limiting models have also been investigated: for instance, the authors of~\cite{GoudonFrank_2010} and~\cite{FrankSun_2018} analyze the normal diffusion limit and super-diffusion limit, respectively. A consistent assumption in their work is that the equilibrium in Eq.~\eqref{intro:age}—specifically, $u_{\rm eq}= e^{\int_0^a d(a')da' }$—is integrable. In our analysis, a major difficulty is that the sub-diffusion regime emerges precisely when this equilibrium is non-integrable. This corresponds to long waiting times between jumps and also hinders the establishment of strong a priori estimates.  Finally, to our knowledge, this is the first rigorous derivation of a sub-diffusion equation from a microscopic model via asymptotic analysis, with appropriate initial conditions explicitly identified. This regime is examined in~\cite{CGA_2019} under a different scaling, yielding a Hamilton-Jacobi equation, while self-similar solutions are constructed in~\cite{BerryLG2016}. Our approach differs from these works, and—analogous to the use of Fourier transforms for super-diffusion—we employ Laplace transforms to derive the sub-diffusion equation.

The organization of this paper is as follows: Section~\ref{sec:LaplaceTime} introduces the notations and demonstrates how to apply the Laplace transform to prove the decay rate for large time. Section~\ref{sec:agetime} establishes the power-law decay rate for an age-structured model in the large-time regime, while Section~\ref{sec:sub-diffAge} derives the sub-diffusion equation when spatial jumps are modeled by a nonlocal kernel. The corresponding scales of the age-structured jump model are identified, and the contribution from the initial condition is shown to emerge as a source term in the sub-diffusion equation.

\section{Laplace transform and time decay}
\label{sec:LaplaceTime}

As a preparation to the case with space, we introduce notations and material concerning the Laplace transform and its use for proving large time decay rates.
We consider functions $u(t)$, $t\in \R$ satisfying $u(t)=0$ for $t\leq 0$ and $u(t) \geq 0$ for $t>0$. The Laplace transform of $u(t)$ is denoted by $\widehat u(s)$, 
\[
 \widehat u (s) ={\cal L}_s u=\int_0^\infty e^{-st} u(t) dt, \qquad s \geq 0.
\]
Notice that, thanks to our convention for $t\leq 0$, 
 $
 s \;  \widehat u (s) =\int_0^\infty e^{-st} \frac d {dt} u(t) dt,
 $
 which can be rewritten as
 \begin{equation}\label{transitst}
 s{\cal L}_s u={\cal L}_s \p_t u.
 \end{equation}
We also remark that, for $u\geq 0$, 
 \[
 u\in L^1(\R)  \qquad \Longleftrightarrow \qquad \widehat u \in L^\infty(0,\infty).
 \]
 Moreover,\[
 u(t) \approx t^{\alpha-1} , \; t\gg 1, \quad 0< \al <1  \qquad \text{implies} \qquad   \widehat u (s) \approx s^{-\al} \to \infty \quad s \approx 0,
 \]
 because, with the change of variable $t \mapsto \sg=st$, 
 \begin{align}\label{salphat}
 \int_0^\infty t^{\al-1} e^{-st} dt =s^{-\al }   \int_0^\infty \sg^{\al-1} e^{-\sg} d\sg. 
 \end{align}
 Here since $\alpha\in(0,1)$, $\sigma^{\alpha-1}e^{-\sigma}$ is integrable on $[0,\infty)$.
 The reverse implication is not so simple and can only be interpreted in an integral sense as given by Lemma ~\ref{lem:integral}.
\begin{lemma}\label{lem:integral}
Assume that, $u(t)\geq 0$  for $t>0$, and that for some $\alpha$ with $0\leq \al \leq 1$ and  constants $\underline K$, $\overline K$, the following holds for $0<s \leq 1$: $\underline K s^{-\al} \leq  \widehat u (s) \leq \overline K s^{-\al }$ . Then for all $\eps >0$ and for some constants $\underline K_1(\al)$ and $\overline K_1(\al)$ that are independent of $\eps$,
\begin{equation} \label{LT:timedecay}
 \frac {\underline K_1(\al)} \eps \leq \int_{0}^\infty u(t) t^{-\al- \eps } dt , \qquad   \int_{t\geq 1} u(t) t^{-\al-\eps } dt \leq  \frac { \overline K_1(\al)}{ \eps}.
\end{equation}
\end{lemma}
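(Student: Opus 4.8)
The plan is to use the companion of identity~\eqref{salphat} obtained by exchanging the roles of $s$ and $t$, namely the Gamma-function representation
\[
t^{-(\al+\eps)} = \f{1}{\Gamma(\al+\eps)} \int_0^\infty s^{\al+\eps-1} e^{-st}\, ds, \qquad t>0,
\]
valid since $\al+\eps>0$. Multiplying by $u(t)\ge 0$, integrating in $t$, and swapping the order of integration (legitimate by Tonelli, as every factor is nonnegative) converts the unknown $t$-integral into an $s$-integral of the Laplace transform:
\[
\int_0^\infty u(t)\, t^{-(\al+\eps)}\, dt = \f{1}{\Gamma(\al+\eps)}\int_0^\infty s^{\al+\eps-1}\, \widehat u(s)\, ds.
\]
The mechanism behind the $1/\eps$ behaviour is then transparent: the hypothesis $\widehat u(s)\asymp s^{-\al}$ makes the effective weight $s^{\al+\eps-1}s^{-\al}=s^{\eps-1}$, whose elementary integral $\int_0^1 s^{\eps-1}ds=1/\eps$ produces the claimed blow-up. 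For $\al$ in the relevant range and $0<\eps\le1$, the prefactor $\Gamma(\al+\eps)$ stays between two positive constants depending only on $\al$, and will be absorbed into $\underline K_1(\al)$ and $\overline K_1(\al)$.

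For the lower bound I would simply discard the contribution of $s>1$, which is legitimate because $\widehat u\ge0$ there, and then apply the hypothesis $\widehat u(s)\ge \underline K s^{-\al}$ on $(0,1]$:
\[
\int_0^\infty u(t)\,t^{-(\al+\eps)}\,dt \ \ge\ \f{\underline K}{\Gamma(\al+\eps)}\int_0^1 s^{\eps-1}\,ds \ =\ \f{\underline K}{\Gamma(\al+\eps)\,\eps},
\]
which gives the first inequality with $\underline K_1(\al)=\underline K/\sup_{0<\eps\le1}\Gamma(\al+\eps)$.

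The hard part is the upper bound, precisely because the master formula above involves $\widehat u(s)$ for \emph{all} $s>0$, whereas the hypothesis controls $\widehat u$ only on $(0,1]$; for large $s$ (equivalently, the small-$t$ behaviour of $u$) no information is available, and this is exactly why the statement truncates the integral to $t\ge1$. To confine all estimates to the controlled band, I would show that for $t\ge1$ the power $t^{-(\al+\eps)}$ is dominated by the \emph{truncated} representation: the substitution $\sg=st$ gives
\[
\int_0^1 s^{\al+\eps-1}e^{-st}\,ds = t^{-(\al+\eps)}\int_0^t \sg^{\al+\eps-1}e^{-\sg}\,d\sg \ \ge\ t^{-(\al+\eps)}\,\f{e^{-1}}{\al+\eps},
\]
using $\int_0^t\ge\int_0^1$ together with $e^{-\sg}\ge e^{-1}$ on $(0,1)$, so that $t^{-(\al+\eps)}\le (\al+\eps)\,e\int_0^1 s^{\al+\eps-1}e^{-st}\,ds$ for $t\ge1$. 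Inserting this bound, extending the $t$-integral from $[1,\infty)$ to $(0,\infty)$ (again using $u\ge0$), applying Tonelli, and finally using $\widehat u(s)\le \overline K s^{-\al}$ on $(0,1]$ yields
\[
\int_{t\ge1} u(t)\,t^{-(\al+\eps)}\,dt \ \le\ (\al+\eps)\,e\,\overline K\int_0^1 s^{\eps-1}\,ds \ =\ (\al+\eps)\,e\,\overline K\,\f1\eps,
\]
i.e.\ the second inequality with $\overline K_1(\al)=(\al+1)e\,\overline K$ for $\eps\le1$. The only points requiring care are the two Tonelli swaps (immediate from nonnegativity) and the uniform control of the Gamma factor in $\eps$; the essential idea is the truncation trick that restricts every estimate to the range $s\in(0,1]$ on which $\widehat u$ is prescribed.
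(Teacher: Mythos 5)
Your proof is correct and follows essentially the same route as the paper's: your ``master formula'' and truncation trick are exactly the paper's identity $\int_0^1 s^{\al+\eps-1}e^{-st}\,ds=t^{-\al-\eps}\int_0^t\sg^{\al+\eps-1}e^{-\sg}\,d\sg$ combined with Fubini, restricting $s$ to $(0,1]$ for the upper bound and completing to the full Gamma integral for the lower bound (you even correct the paper's minor typo $\int_0^1\sg^{\al+\eps-1}e^{-\sg}\,d\sg\ge e^{-1}/(\al+\eps)$). The only caveat, shared with the paper's own constants, is that $\Gamma(\al+\eps)$ is bounded uniformly in $\eps\in(0,1]$ only for $\al>0$, so the stated $\eps$-independence of $\underline K_1(\al)$ degenerates at the endpoint $\al=0$.
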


\begin{proof}
We prove the second inequality. We first use the immediate control
 \[
 \int_0^1 s^{-1+\al +\eps } \int_0^\infty  e^{-st} u(t) dt ds  =  \int_0^1 s^{-1+\al +\eps } \widehat u (s)   ds \leq  \overline K \int_0^1 s^{-1+\eps} ds =  \frac { \overline K} \eps,
 \]
and thus we conclude, with the change of variable $s \mapsto \sg= st$, that
 \begin{align}\label{uts}
 \int_1^\infty u(t)  \int_0^1 s^{-1+\al+\eps }  e^{-st} ds dt  = \int_1^\infty u(t) t^{-\al- \eps }  \int_0^t \sg^{-1+\al +\eps }  e^{-\sg} d\sg dt \leq  \frac { \overline K} \eps.
 \end{align}
 This gives the upper bound because $t \geq 1$ and $ \int_0^1 \sg^{-1+\al +\eps }  e^{-\sg} d\sg \geq \frac{e^{-1}}{1-\al -\eps}>\f{e^{-1}}{1-\alpha} $.
 
 For the lower bound, the same calculations give, on the one hand 
 \[
 \int_0^1 s^{-1+\al +\eps } \int_0^\infty  e^{-st} u(t) dt ds \geq  \frac { \underline K} \eps,
 \]
 and on the other hand, similar equality as in \eqref{uts} gives 

 \[
 \frac {\underline K} \eps \leq \int_0^\infty u(t) t^{-\al -\eps }  \int_0^t \sg^{-1+\al +\eps }  e^{-\sg} d\sg dt  \leq \int_0^\infty u(t) t^{-\al -\eps }  dt \int_0^\infty \sg^{-1+\al +\eps }  e^{-\sg} d\sg .
 \] Then $\underline K_1(\al)$ is given by $\underline{K}/ \int_0^\infty \sg^{-1+\al +\eps }  e^{-\sg} d\sg$.
 \end{proof}

\begin{remark} The conclusion \eqref{LT:timedecay} is a statement compatible with the large time decay of $u(t)$. Namely, when $u(t) \approx t^{\alpha-1}$,  \eqref{LT:timedecay} is satisfied. Notice also that the same conclusion follows with the weaker assumption for $s$ such that
\[\frac { \underline K} \eps \leq  \int_0^1 s^{-1+\al +\eps } \widehat u (s)   ds \leq  \frac { \overline K} \eps.\]
\end{remark}

\section{Age structured equation and time decay}
\label{sec:agetime}
 
To apply the above considerations about the Laplace transform, we consider the age structured equation:
\begin{align} \label{eq:hom} \begin{cases}
 \partial_t u(t,a) +\partial_a  u(t,a) + d(a)  u(t,a) =0, \quad a\geq 0, \; t \geq 0,
 \\
 U(t):=u(t,0)= \int_0^\infty d(a)  u(t,a) da, 
 \\
 u(0,a)=u^0(a),
\end{cases} \end{align}
and $u(t,a)$ is extended to $0$ for $t<0$. 
Here $d(a)>0$ and the initial data is supposed to satisfy
\begin{align} \label{as:homID}
 u^0(a)\geq 0, \quad  u^0(a) \in L^1(0,\infty), \quad M:=\int_0^\infty  u^0(a) da<\infty.
\end{align}
The problem is conservative, i.e., 
\[
\int_0^\infty u(t,a) da = M, \qquad \forall t \geq 0.
\]
 We define
 \begin{align} \label{def:D}
 D(a)= \int_0^a d(a') da'.
\end{align}
When $e^{-D}$ is integrable, it is standard that solutions of \eqref{eq:hom} converge to a multiple of $e^{-D}$ \cite{IannelliBook,PerthameTEB}. When it is not, it remains that $de^{-D}$ is integrable. For this reason, we investigate the equation satisfied by the renewal term $U(t)$.

\paragraph{Useful formulas.}
In order to avoid abstract technicalities, we often use  
 \begin{align} \label{as:hom}
 d(a)= \frac{\alpha}{1+a}, \qquad e^{-D(a)} =  \frac{1}{(1+a)^\alpha}, \qquad \alpha\in(0,1).
 \end{align} 
Because $\alpha$ is fixed, we set 
\begin{align}
\Gamma_\alpha:=\Gamma(1-\al) = \int_0^\infty  \frac{e^{-\sg} }{\sg^\al} d\sg \label{Gamma}.
\end{align} 
For any given function $P(a)\in L^1[0,\infty)$, the following formula is useful
\begin{equation} \label{FundLaplace}
 {\cal L}_s  \partial_t \int_0^t \frac{P(a)}{(t-a)^\al}d\tau = \Gamma_\al s^{\al} {\cal L}_s  P.
\end{equation}
This formula holds because, integrating by parts in $t$, we can compute
\begin{align*}
\int_0^\infty e^{-st} \partial_t \int_0^t \frac{P(a)}{(t-a)^\al}da dt &= s \int_0^\infty e^{-st} \int_0^t \frac{P(a)}{(t-a)^\al}da dt 
=s \int_0^\infty P(a) e^{-a s}\int_{a}^\infty \frac{e^{-s(t-a)} }{(t-a)^\al}dt da
\\[10pt]
&=s \int_0^\infty P(a) e^{-a s}\int_{0}^\infty \frac{e^{-st} }{ t^\al}dt da
=s^\al \int_0^\infty P(a) e^{-a s}da \int_{0}^\infty \frac{e^{-\sg} }{\sg^\al}d\sg, 
\end{align*}
where the last equality follows from the change of variable $t \mapsto \sg= st$.

In this paper, we also use a related formula
\begin{equation} \label{FundLaplace2}
 {\cal L}_s  \partial_t \int_0^t \frac{P(\tau)}{(1+t-\tau)^\al}d\tau = s \int_0^\infty \frac{e^{-sa}}{(1+a)^\al} da\; {\cal L}_s  P .
\end{equation}
It is derived exactly as for \eqref{FundLaplace}.

\paragraph{The equation for $U(t)$.}

 Applying the Laplace transform to the age structured model \eqref{eq:hom}, we find
\begin{align*}
 \partial_a  \widehat u(s,a) +(s+d(a)) \widehat u(s,a) = u^0(a).
\end{align*}
The solution is given by 
\begin{align*}
 \widehat  u(s,a) = \widehat  u(s,0) e^{-D(a) -sa} + e^{-D(a) -sa} \int_0^a  u^0(a') e^{D(a') +sa'} da'.
\end{align*}
Thus, the Laplace transform of the renewal term  $\widehat  U(s) := \widehat u(s,0)$ is determined by
\begin{align}\label{hatU}
 \widehat  U(s) =  \widehat  U(s)  \int_0^\infty d(a) e^{-D(a) -sa} da + \int_0^\infty d(a) e^{-D(a) -sa} \int_0^a  u^0(a') e^{D(a') +sa'} da' da.
\end{align}
To treat the first term on the right hand side (RHS) of \eqref{hatU},  we compute
\begin{align*}
 \int_0^\infty d(a) e^{-D(a) -sa} da =- s \int_0^\infty e^{-D(a) -sa} da +  \int_0^\infty (s+d(a)) e^{-D(a) -sa} da =- s \int_0^\infty e^{-D(a) -sa} da+1.
\end{align*}
Therefore, \eqref{hatU} becomes, using the Fubini theorem and the same argument as before but replacing the lower bound of the integrations from $0$ to $a'$,
\begin{align*}
s \int_0^\infty e^{-D(a) -sa} da \;   \widehat  U(s)& = \int_0^\infty  u^0(a') e^{D(a') +sa'}  \int_{a'}^\infty d(a) e^{-D(a) -sa}  da da' \notag
\\
& = \int_0^\infty  u^0(a)da- s \int_0^\infty  u^0(a') e^{D(a') +sa'}  \int_{a'}^\infty e^{-D(a) -sa}  da da' .
\end{align*}
When $d(a)$ is given by \eqref{as:hom}, the above equation can be written as 
\begin{align} 
s \int_0^\infty \frac{e^{-sa}}{(1+a)^\alpha} da \;   \widehat  U(s)
& =\int_0^\infty  u^0(a)da- s \iint_0^\infty  u^0(a')\frac{(1+a')^\alpha}{(1+t+a')^\alpha} e^{-st} dt da'.\label{LT:decay0}
\end{align}
Here the last equality is obtained by the change of variable $t=a-a'$.

Using the formula \eqref{transitst} and \eqref{FundLaplace2}, we may rewrite \eqref{LT:decay0} as
\begin{align*} 
 {\cal L}_s  \partial_t \int_0^t \frac{U(\tau)}{(1+t-\tau)^\al}d\tau 
 &=\int_0^\infty  u^0(a)da -s {\cal L}_s \int_0^\infty u^0(a) \frac{(1+a)^\alpha}{(1+t+a)^\alpha}da \notag\\
 &=\int_0^\infty  u^0(a)da - {\cal L}_s  \partial_t \int_0^\infty u^0(a) \frac{(1+a)^\alpha}{(1+t+a)^\alpha}da,
\end{align*}
and thus
\begin{align} \label{LT:decayderivative}
\partial_t \int_0^t \frac{U(\tau)}{(1+t-\tau)^\al}d\tau = \delta(t) \int_0^\infty  u^0(a)da -\partial_t \int_0^\infty u^0(a) \frac{(1+a)^\alpha}{(1+t+a)^\alpha}da.
\end{align}

We finally obtain the equation that $U(t)$ satisfies, which is in the following integral form:
\begin{align} \label{LT:decay1}
\int_0^t \frac{U(\tau)}{(1+t-\tau)^\al}d\tau =\int_0^\infty  u^0(a)da - \int_0^\infty u^0(a) \frac{(1+a)^\alpha}{(1+t+a)^\alpha}da.
\end{align}
For $t>0$, differentiating both sides of \eqref{LT:decay1} with respect to 
$t$ yields \eqref{LT:decayderivative}, whereas both sides of \eqref{LT:decay1} vanish at $t=0$. The integral on the left-hand side of \eqref{LT:decay1} has no singularity in the integration kernel; hence, it is not a Caputo derivative but still incorporates the memory effect.

From the explicit exact formula~\eqref{LT:decay1}, we have the time decay property of $U(t)$:
\begin{proposition}[Time decay for age structured eq.] \label{prop:asdecay} 
With assumptions  \eqref{as:homID} and \eqref{as:hom} with $\alpha \in (0,1)$,  the solution of \eqref{eq:hom} decays as  $u(t,0)=U(t) \approx t^{\al-1} $ for $t\gg 1$,  in the sense that
\begin{align} \label{asympt_Simple}
\lim_{t \to \infty} \int_0^t \frac{U(\tau)}{(1+t-\tau)^\al}d\tau =\int_0^\infty u^0(a) da.
\end{align}
In particular, for all $0< \delta <1$, we have for some constants $C_\pm(\alpha, u^0)$,
\begin{align} \label{asympt_Simple2}
\frac{C_-(\alpha,u^0)}{\delta} \leq \int_0^\infty t^{-\al-\delta} U(t) dt, \qquad \int_1^\infty t^{-\al-\delta} U(t) dt \leq M \frac{C_+(\alpha)}{\delta} .
\end{align}
\end{proposition}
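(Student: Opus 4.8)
The plan is to read off both statements directly from the exact renewal identity \eqref{LT:decay1}. Throughout, write $M=\int_0^\infty u^0(a)\,da$ and introduce
\[
g(t)=\int_0^\infty u^0(a)\,\frac{(1+a)^\alpha}{(1+t+a)^\alpha}\,da, \qquad A(s)=\int_0^\infty \frac{e^{-sa}}{(1+a)^\alpha}\,da,
\]
so that \eqref{LT:decay1} reads $\int_0^t \frac{U(\tau)}{(1+t-\tau)^\alpha}\,d\tau = M-g(t)$. For the limit \eqref{asympt_Simple} I would simply let $t\to\infty$ in this identity. For each fixed $a$ the integrand of $g$ tends to $0$, and it is dominated by $u^0(a)\in L^1(0,\infty)$ since $\frac{(1+a)^\alpha}{(1+t+a)^\alpha}\le 1$; dominated convergence then gives $g(t)\to 0$, hence the left-hand side tends to $M$. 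This is the one place where the integrability in \eqref{as:homID} is used, and it is the fact I reuse below.

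For \eqref{asympt_Simple2} I would apply the Laplace transform to \eqref{LT:decay1}. The left-hand side is the convolution of $U$ with the kernel $(1+t)^{-\alpha}$, so by the convolution rule (equivalently \eqref{FundLaplace2}, after differentiating in $t$) it transforms into $A(s)\,\widehat U(s)$, while the right-hand side transforms into $\frac{M}{s}-\widehat g(s)=\mathcal{L}_s[M-g]$. Thus
\[
\widehat U(s)=\frac{1}{A(s)}\,\mathcal{L}_s[M-g](s).
\]
The goal is then to sandwich $\widehat U(s)$ between two multiples of $s^{-\alpha}$ on $0<s\le 1$ and invoke Lemma~\ref{lem:integral} with $\varepsilon=\delta$; this is exactly the mechanism advertised in the Introduction, with the non-integrable equilibrium handled entirely through the transform.

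The remaining work consists of two elementary, one-sided estimates. For $A(s)$, the bound $(1+a)^{-\alpha}\le a^{-\alpha}$ together with the Gamma-function identity behind \eqref{salphat} gives $A(s)\le \Gamma_\alpha\, s^{\alpha-1}$, while restricting the integral to $a\ge 1$, where $(1+a)^{-\alpha}\ge (2a)^{-\alpha}$, and changing variables gives $A(s)\ge c_1(\alpha)\,s^{\alpha-1}$ for all $0<s\le 1$. For the numerator $N(s)=\mathcal{L}_s[M-g]$, the bound $0\le M-g\le M$ yields $N(s)\le M/s$ at once; for the lower bound I use that $g(t)\to 0$ (from the first paragraph), so there exists $T_0=T_0(\alpha,u^0)$ with $M-g(t)\ge M/2$ for $t\ge T_0$, whence $N(s)\ge \frac{M}{2}\int_{T_0}^\infty e^{-st}\,dt\ge \frac{M e^{-T_0}}{2s}$ for $0<s\le 1$. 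Combining the four estimates,
\[
\frac{M e^{-T_0}}{2\,\Gamma_\alpha}\,s^{-\alpha}\le \widehat U(s)\le \frac{M}{c_1(\alpha)}\,s^{-\alpha}, \qquad 0<s\le 1,
\]
so Lemma~\ref{lem:integral} applies with $\underline K=\frac{M e^{-T_0}}{2\Gamma_\alpha}$ and $\overline K=\frac{M}{c_1(\alpha)}$, yielding exactly \eqref{asympt_Simple2}: the constant $C_-(\alpha,u^0)$ carries the $u^0$-dependence through $T_0$ and $M$, while $\overline K$ is proportional to $M$, so the upper bound takes the form $M\,C_+(\alpha)/\delta$ with $C_+$ depending only on $\alpha$.

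The one genuine subtlety — and the step I expect to be the main obstacle — is the lower bound on $N(s)$. It cannot come from any pointwise decay rate of $U$ (which is precisely what we are trying to establish), but only from the qualitative fact $g\to 0$; one must therefore be careful that the resulting constant $\underline K$ stays strictly positive, i.e. that $M>0$. If $M=0$ then $u^0\equiv 0$, $U\equiv 0$, and the lower bound in \eqref{asympt_Simple2} is vacuous, so this degenerate case may be excluded at the outset.
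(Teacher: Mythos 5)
Your proof is correct, and while it starts from the same exact identity \eqref{LT:decay1} as the paper (and handles the limit \eqref{asympt_Simple} identically, by dominated convergence on $g(t)=\int_0^\infty u^0(a)\frac{(1+a)^\alpha}{(1+t+a)^\alpha}da$), your treatment of \eqref{asympt_Simple2} takes a genuinely different route. The paper never forms $\widehat U(s)$ at this stage: it multiplies \eqref{LT:decay1} directly by the weight $t^{-(1+\delta)}$, integrates over $t>1$, and uses Fubini together with the pointwise kernel comparison $\frac{t^{-(1+\delta)}}{(1+t-\tau)^\alpha}\geq t^{-1-\alpha-\delta}$ for $\tau\geq 1$ (upper bound) and the monotonicity of $g$, i.e. $M-g(t)\geq M-g(1)>0$ for $t\geq 1$ (lower bound). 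You instead take the Laplace transform of the convolution identity, establish the two-sided bound $\widehat U(s)\asymp s^{-\alpha}$ on $0<s\leq 1$ via elementary estimates on $A(s)=\mathcal{L}_s\big[(1+t)^{-\alpha}\big]$ and on $\mathcal{L}_s[M-g]$, and then invoke Lemma~\ref{lem:integral} with $\eps=\delta$. Both arguments ultimately rest on integration against a power weight plus Fubini (that is what Lemma~\ref{lem:integral} does internally), but your route produces the intermediate statement $\widehat U(s)\asymp s^{-\alpha}$, which is a clean standalone quantification of the decay and is more in the spirit of the Laplace-transform machinery set up in Section~\ref{sec:LaplaceTime}; the paper's direct computation avoids the transform and yields a more explicit lower constant, $M-g(1)$, in place of your $T_0$-dependent one (your observation that $M>0$ must be assumed for the lower bound to be non-vacuous applies equally to the paper's version). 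Your constants have the right structure: $\overline K\propto M$ with an $\alpha$-only factor gives $MC_+(\alpha)/\delta$, and the $u^0$-dependence of $C_-$ enters through $M$ and $T_0$.
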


\begin{proof}
From \eqref{LT:decay1}, we obtain \eqref{asympt_Simple} since
\[
\int_0^t \frac{U(\tau)}{(1+t-\tau)^\al}d\tau = \int_0^\infty  u^0(a)da +o(1) , \qquad t\gg 1.
\]

The inequality \eqref{asympt_Simple2} follows by integrating \eqref{LT:decay1} with the weight $t^{-(1+\delta)}$ for $t>1$. For the upper bound, due to \eqref{LT:decay1} and the positivity of $u^0(a)$ in \eqref{as:homID}, we argue as follows:  
\[
\int_1^\infty t^{-(1+\delta)}\int_{0}^t \frac{U(\tau)}{(1+t-\tau)^\al}d\tau dt <\int_1^\infty t^{-(1+\delta)}dt\int_0^\infty u^0(a)da\leq \frac 1 \delta \int_0^\infty  u^0(a)da.
\]
Using the Fubini theorem, we find with a further reduction,
\[
\int_1^\infty U(\tau) \int_{\tau}^\infty \frac{t^{-(1+\delta)}}{(1+t-\tau)^\al}dt d\tau \leq \frac 1 \delta \int_0^\infty  u^0(a)da.
\]
from which we conclude the upper bound by using that, when $\tau\geq 1$,
\[\frac{t^{-(1+\delta)}}{(1+t-\tau)^{\alpha}}\geq t^{-1-\alpha-\delta}.\] We may also write 
\[
 \int_1^\infty t^{-(1+\delta)}\int_{0}^t \frac{U(\tau)}{(1+t-\tau)^\al}d\tau dt=\int_0^\infty U(\tau) \int_{\tau}^\infty \frac{t^{-(1+\delta)}}{(1+t-\tau)^\al}dt d\tau .
\]
From \eqref{LT:decay1}, the left hand side of the above equation can be bounded from below such that 
\[
\int_1^\infty t^{-(1+\delta)}\Big[\int_0^\infty u^0(a)da-\int_0^\infty u^0(a)\frac{(1+a)^\al}{(2+a)^\al}da\Big]dt \leq \int_1^\infty t^{-(1+\delta)}\int_{0}^t \frac{U(\tau)}{(1+t-\tau)^\al}d\tau dt, 
\]
from which we conclude the lower bound in \eqref{asympt_Simple2}.
\end{proof}

\begin{remark}
The interpretation is that the density, which is conserved, is spreading to infinity as for the heat equation in the whole space. The factor $d(a)$ gives less weight for large $a$ and thus does not see any mass.
\end{remark}
\section{sub-diffusion equation}
\label{sec:sub-diffAge}

\subsection{Assumptions and main result}

Next, we consider the scenario where particles undergo spatial jumps upon renewal while inside a trap located at position $y$, the duration time of particles follows an age-structured model. Upon exiting the trap, the transition probability of jumping to a new position $x$ is governed by a rate $\omega_\epsilon(x,y)$, which satisfies the following conditions: it is a measurable function such that
\begin{equation}\label{as:Subomega}
 \omega_\epsilon(x,y) \geq 0, \qquad \int_{\mathbb{R}^d} \omega_\epsilon(x,y) \, dy = 1, \qquad \text{and} \qquad \omega_\epsilon(x,y) = \omega_\epsilon(y,x).
\end{equation}
Consequently, the age-structured jump model can be formulated as:
 \begin{align} \label{eq:AgeSpace} 
 \begin{cases}
 \eps^{\beta}\partial_t u_\eps(t,a,x) +\partial_a  u_\eps(t,a,x) + d(a)  u_\eps(t,a,x) =0 , \quad a\geq 0, \; x \in \R^d, \; t \geq 0,
 \\[5pt]
\dis U_\eps(t,x):= u_\eps(t,0,x)= \int_0^\infty \int_{\R^d} d(a)  u_\eps(t,a,y) \omega_\eps (x,y)dady, 
 \\[5pt]
 u_\eps(0,a,x)=u^0(a,x).
 \end{cases} \end{align}
 As before, we define 
 \[
 D(a) =\int_0^a d(a')da' \qquad \big(e^{-D(a)}= \frac{1}{(1+a)^\alpha} \text{ in case of \eqref{as:hom}}\big) .
 \]
 We assume that the initial data satisfies, for some constant $C^0$,
 \begin{align} \label{as:SubID}
 u^0\geq 0, \qquad  e^{D(a)} u^0 \in L^1\big((0,\infty)\times \R^n\big), \qquad  u^0 (a,x) \leq C^0e^{-D(a)}.
 \end{align}
The parameter $\eps$ is defined from the scaled probability that a particles at position $y$ will jump to position $x$. We assume that, additionally to~\eqref{as:Subomega}, the transition probability $\omega_\eps(x,y)$ satisfies,  as $\eps \to 0$, 
\begin{align} \label{as:Subomegaeps}
\begin{cases}
\dis  \int_{\R^d} \frac{1}{\eps^2}  (x-y)\otimes(x-y) \omega_\eps(x,y) dy \to A(x) >0  \qquad \text{a symmetric positive matrix},
\\[10pt]
\dis \frac{1}{\eps^3} \int_{\R^d}  |x-y|^3 \omega_\epsilon (x,y)dy \in L^\infty(\R^d) \qquad  \text{ uniformly in } \eps.
 \end{cases}
 \end{align}
 In one dimension, a simple example of $\omega_\epsilon$ that satisfies the above requirements is 
 \begin{equation*}\label{eq:omegaexample}
\omega_\eps(x,y)=\frac{1}{\eps}f_{\eps}(\frac{|x-y|}{\eps})=\frac{1}{\eps}f_{\eps}(|z|)=\left\{
\begin{array}{lll}
\frac{1-|z|}{\eps}, &z\in\left[-1,1\right), 
\\[5pt]
0, &\mbox{$z\in(-\infty,-1)\cup(1,+\infty)$}
\end{array}\right.
\end{equation*}
 The value of $\beta$ in Eq.~\eqref{eq:AgeSpace} depends on the jump rate $d(a)$ and is  determined further.
 \\
 
 With these assumptions, the problem is conservative and we fix the total mass $M$ defined as
 \[
 \int_0^\infty \int_{\R^d} u_\eps (t,a,x)  da dx=  \int_0^\infty \int_{\R^d} u^0(a,x)  da dx=:M, \qquad \forall t\geq 0 .
 \]

Since $e^{-D(a)}$ is a steady state solution, the comparison principle shows that the solution satisfies, 
 \[
 u_\eps (t,a,x) \leq C^0 e^{-D(a)}, \qquad \qquad U_\eps (t,x) \leq C^0, \qquad \quad \forall t\geq 0 .
 \]
Also, with further assumptions on the initial data, further regularity is available, see subsection~\ref{sec:tDecay}.

Let us set 
\begin{align*} 
\rho_\eps(t,x)=\int_0^\infty u_\eps(t,a,x)da, \qquad \qquad  \rho^0(x)= \int_0^\infty u^0(a,x) da
\end{align*}
and to simplify notations, 
\[
\nabla \nabla(A\rho):= \sum_{i,j} \partial^2_{i,j} [A_{i,j} \rho].
\]
We consider the case when $d(a)$ is determined by \eqref{as:hom}, i.e., $D(a)=\frac{1}{(1+a)^\alpha}$, or when $d(a)=D_0$, i.e., $D(a)e^{D_0 a}$ (this is simpler than taking $\alpha>1$ and has the same effect). 
Consider the weak limit, in bounded measures, $\rho_0$ of $\rho_\eps$, as $\eps \to 0$ (a priori extraction of a subsequence is needed but uniqueness of the limit shows that the full family converges). The parameter $\alpha$ will decide of the time scale $\beta$ and of the limiting equation, namely, between normal diffusion when $d(a)=D_0$: 
\begin{align} \label{NormaDiff}
\begin{cases}
\partial_t \rho_0(t,x) - D_0\nabla \nabla[A(x) \rho_0]=0,
\\[5pt]
\rho_0(0,x)= \rho^0(x),
\end{cases}
\end{align} 
and sub-diffusion when $d(a)$ is determined by \eqref{as:hom} with $0< \alpha <1$:
\begin{align} \label{AbNormaDiff}
\begin{cases}
\partial_t \int_0^t \frac{\rho_0(\tau,x)}{(t-\tau)^\alpha} d\tau- \nabla \nabla [A(x) \rho_0(t,x)]= t^{-\al}\rho^0(x),
\\[5pt]
\rho_0(0,x)= 0 \qquad \text{for} \quad t< 0.
\end{cases}
\end{align} 

\begin{remark}\label{remark5}
It is interesting to note that the initial contribution of $u^0(a,x)$ appears solely in the source term rather than in the initial condition, which differs from the classical normal diffusion case. When $t\to 0+$,  the dominant term on the left-hand side of \eqref{AbNormaDiff} becomes $t^{-\alpha}\rho_0(0,x)$. Comparing with the right-hand side of \eqref{AbNormaDiff}, this indicates that $\rho_0(0+,x)=\rho^0(x)$, meaning $\rho_0(t,x)$ jumps from 0 to $\rho^0(x)$ at $t=0$.
\end{remark}

For the sub-diffusion case, we also have, when $\eps\to 0$, 
\[
\int_0^t \frac{U_0(\tau,x)}{(t-\tau)^\alpha} d\tau=U_0(0,x) O(t^{1-\alpha} ) \quad \text{for} \quad t\approx 0.
\]

These equations are understood in the weak form, testing against a smooth test function $\psi(t,x)$ vanishing at $t=T$ and compactly supported in $\R^n$. For Eq.~\eqref{NormaDiff} this is
\[
\int_0^T \int_{\R^n}  \rho_0(t,x) \big[-\partial_t \psi(t,x) - D_0 \sum_{i,j}  A_{i,j}(x) \partial^2_{i,j} \psi(t,x)\big]dx dt= \int_{\R^n} \psi(0,x) \rho_0(0,x) dx .
\]
And for the sub-diffusion case, it is
\[
\int_0^T \int_{\R^n} \rho_0(t,x) \big[- \int_t^T \frac{\partial_\tau \psi(\tau,x)}{(\tau-t)^\alpha}d\tau - \sum_{i,j}  A_{i,j}(x) \partial^2_{i,j} \psi(t,x)\big]dxdt= \int_{t=0}^T \int_{a=0}^\infty \int_{\R^n}t^{-\alpha} \psi(t,x) u^0(a,x) dx da dt .
\]

\begin{theorem} [Derivation of the macroscopic limit] \label{th:limit} 
Assume  \eqref{as:SubID}, \eqref{as:Subomega}, \eqref{as:Subomegaeps} and choose $d(a)$ as in~\eqref{as:hom}. Then, we have
\\[2pt]
(i) \, for $\al > 1$ and $\beta=2$, the limit of $\rho_\eps(t,x)$ satisfies the diffusion equation~\eqref{NormaDiff},
\\[5pt]
(ii) for $\al \in(0, 1) $ and $\alpha \beta=2$, the limit of $\rho_\eps(t,x)$ satisfies the sub-diffusion equation~\eqref{AbNormaDiff}.
\end{theorem}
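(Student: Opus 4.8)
The plan is to run everything through the Laplace transform in time, exactly as prepared in Sections~\ref{sec:LaplaceTime}--\ref{sec:agetime}, since this is the tool that converts the non-integrable age equilibrium into a tractable algebraic relation. Writing $\sigma=\eps^\beta s$ and applying ${\cal L}_s$ to the first line of \eqref{eq:AgeSpace} with the initial datum \eqref{as:SubID}, the transport part becomes the linear ODE in $a$
\begin{equation*}
\p_a\widehat u_\eps(s,a,x)+\big(\sigma+d(a)\big)\widehat u_\eps(s,a,x)=\eps^\beta u^0(a,x),
\end{equation*}
which I solve explicitly as $\widehat u_\eps=e^{-D(a)-\sigma a}\widehat U_\eps(s,x)+\eps^\beta e^{-D(a)-\sigma a}\int_0^a e^{D(a')+\sigma a'}u^0(a',x)\,da'$. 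Inserting this into the renewal condition and using the symmetry and normalization \eqref{as:Subomega}, I obtain a closed equation for $\widehat U_\eps$,
\begin{equation*}
\widehat U_\eps(s,x)-\int_{\R^d}\omega_\eps(x,y)\,\widehat U_\eps(s,y)\,dy+K(\sigma)\int_{\R^d}\omega_\eps(x,y)\,\widehat U_\eps(s,y)\,dy=\eps^\beta\int_{\R^d}\omega_\eps(x,y)\,G_\eps(s,y)\,dy,
\end{equation*}
where, as in Section~\ref{sec:agetime}, $K(\sigma)=\sigma\int_0^\infty e^{-D(a)-\sigma a}\,da$ and $G_\eps$ is the explicit source built from $u^0$.

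The two ingredients that must balance are the memory kernel and the jump operator. For \eqref{as:hom} with $0<\al<1$, the computation behind \eqref{salphat} gives $K(\sigma)=\sigma\int_0^\infty(1+a)^{-\al}e^{-\sigma a}\,da\sim\Gamma_\al\,\sigma^\al$ as $\sigma\to0$, while \eqref{as:Subomegaeps} makes the jump operator $\mathcal J_\eps\phi(x):=\int_{\R^d}\omega_\eps(x,y)[\phi(y)-\phi(x)]\,dy$ of size $\eps^2$; since $\sigma=\eps^\beta s$, the choice $\al\beta=2$ forces $K(\sigma)\sim\Gamma_\al\eps^2 s^\al$, so both terms are $O(\eps^2)$. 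I then eliminate $\widehat U_\eps$ in favour of $\widehat\rho_\eps=\int_0^\infty\widehat u_\eps\,da=n(\sigma)\widehat U_\eps+\eps^\beta R_\eps$, with $n(\sigma)=\int_0^\infty(1+a)^{-\al}e^{-\sigma a}\,da\sim\Gamma_\al\sigma^{\al-1}$ and $K(\sigma)=\sigma n(\sigma)$. Dividing the closed equation by $\eps^2$ and using $\al\beta=2$, the zeroth-order coefficient $\eps^{-2}K(\sigma)$ converges to $\Gamma_\al s^\al$, the jump term converges (after testing against a smooth $\psi$, using self-adjointness of $\mathcal J_\eps$, the second moment in \eqref{as:Subomegaeps}, and the third-moment bound to kill the Taylor remainder) to the diffusion operator $\nabla\nabla(A\rho_0)$, and the source $\eps^{\beta-2}n(\sigma)\int\omega_\eps G_\eps$ converges to $\Gamma_\al s^{\al-1}\rho^0(x)$. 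The last identification rests on the integration by parts $\int_0^\infty d(a)e^{-D(a)}\int_0^a e^{D(a')}u^0\,da'\,da=\rho^0(x)$, which is precisely where the initial age profile collapses onto a single source term.

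It then suffices to read these limits through the fundamental identities \eqref{FundLaplace} and \eqref{Gamma}: since $\Gamma_\al s^\al\widehat\rho_0={\cal L}_s\,\p_t\int_0^t(t-\tau)^{-\al}\rho_0(\tau,x)\,d\tau$ and $\Gamma_\al s^{\al-1}={\cal L}_s\,t^{-\al}$, inverting the transform yields exactly \eqref{AbNormaDiff}, with source $t^{-\al}\rho^0$ and the instantaneous jump described in Remark~\ref{remark5}. Case~(i) is the same computation with $\al>1$: now $e^{-D}$ is integrable, $n(\sigma)\to n(0)<\infty$ and $K(\sigma)\sim n(0)\sigma$ is linear, so the parabolic scaling $\beta=2$ produces the local factor $s\widehat\rho_0$ in place of $s^\al\widehat\rho_0$ and \eqref{NormaDiff} follows. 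The compactness needed to extract the limit comes only from conservation of mass: $u_\eps\le C^0e^{-D}$ together with $\int\!\int u_\eps=M$ gives $\rho_\eps$ bounded in $L^\infty_t(L^1_x)$, hence $\widehat\rho_\eps(s,\cdot)$ bounded in $L^1_x$ for each fixed $s$, which is enough to pass to the weak limit; uniqueness for \eqref{AbNormaDiff} then upgrades the subsequence to convergence of the whole family.

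The main obstacle is exactly the one advertised in the introduction: because $e^{-D}=(1+a)^{-\al}$ is non-integrable for $\al<1$, there is no uniform $L^\infty$ or strong-compactness estimate on $\rho_\eps$, only the $L^1$-in-space mass bound, so every limit is weak (measure-valued) and I must pass to the limit in the nonlocal jump term and in the $\sigma$-dependent memory kernel using this weak convergence alone. Concretely, the delicate points are to control $G_\eps$ and the remainder $\eps^\beta R_\eps$ uniformly as $\sigma=\eps^\beta s\to0$ (their smallness rests on $e^{D}u^0\in L^1$ from \eqref{as:SubID}); to justify the second-order expansion of $\mathcal J_\eps$ against merely weakly convergent $\widehat\rho_\eps$, which requires the first moment $\int\omega_\eps(x,y)(x-y)\,dy$ to be $o(\eps^2)$ (automatic for the even kernel of the displayed example) so that no spurious drift survives; and to verify the exact cancellation $\eps^{-2}K(\sigma)\to\Gamma_\al s^\al$, which is the analytic heart of the $\al\beta=2$ scaling and the reason the Laplace variable, rather than any a priori regularity, carries the proof.
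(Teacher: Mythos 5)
Your proposal follows essentially the same route as the paper: Laplace transform in time, explicit integration of the age ODE, elimination of the renewal term $\widehat U_\eps$ in favour of $\widehat\rho_\eps$, the asymptotics $\sigma\int_0^\infty e^{-D(a)-\sigma a}da\sim\Gamma_\al\sigma^\al$ forcing $\al\beta=2$, identification of the source $t^{-\al}\rho^0$ from the initial-age terms, and inversion via \eqref{FundLaplace}. The argument is correct; your explicit remark that the first moment of $\omega_\eps$ must be $o(\eps^2)$ to avoid a spurious drift is a small point the paper leaves implicit, but it does not change the proof.
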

\subsection{Proof of the main theorem}
\begin{proof}
We argue with the Laplace transform of Eq.~\eqref{eq:AgeSpace}, which gives
\begin{align*} 
 \partial_a  \widehat u_\eps(s,a,x) + [\eps^{\beta} s+ d(a) ]  \widehat u_\eps(s,a,x) =  \eps^{\beta} u^0(a,x),
\end{align*}
With $\widehat U_\eps(s,x)= \widehat u_\eps (s,0,x)$ and the definition of $D(a)$, the solution is given by
\begin{align}\label{usax} 
\widehat u_\eps(s,a,x)  = \widehat U_\eps(s,x) e^{-D(a)  -\eps^{\beta} sa } 
+ \eps^{\beta} e^{-D(a)-\eps^{\beta} sa }   \int_0^a u^0(a',x) e^{D(a') +\eps^{\beta} sa' }  da'.
\end{align}
From this representation formula, we may deduce two macrocospic identities. Firstly, integrating in~$a$, we find 
\begin{align} \label{eq1:rho}
\widehat \rho_\eps(s,x)  = \widehat U_\eps(s,x) \int_0^\infty e^{-D(a)  -\eps^{\beta} sa } da
+ \eps^{\beta} \text{ID}^{(1)}_\eps(s,x)
\end{align}
where, changing $a$ to $a'+t$, we have
\begin{align}
\text{ID}^{(1)}_\eps(s,x)&=\int_0^\infty e^{-D(a)-\eps^{\beta} sa }   \int_0^a u^0(a',x) e^{D(a') +\eps^{\beta} sa' }  da'da \notag\\
&=\int_0^\infty u^0(a’,x)\int_{0}^\infty e^{D(a’)-D(a'+t)-\eps^\beta st} dtda'. \label{eq:ID1}
\end{align}
Secondly, using the definition of $U_\eps(t,x)=u_\eps(t,0,x)$ in \eqref{eq:AgeSpace}, we multiply both sides of~\eqref{usax} (written in the variable $y$ rather than $x$) by $d(a) \omega_\eps (x,y)$, integrate with respect to $a$ from $0$ to $\infty$ and $y$ in $\R^d$. This gives
\begin{align}
\widehat U_\eps(s,x) &= \int_0^\infty \!\!\int_{\R^n} d(a) e^{-D(a)  -\eps^{\beta} sa } \widehat U_\eps(s,y) \omega_\eps (x,y)dady + \eps^\beta \text{ID}^{(2)}_\eps(s,x)  \notag\\
&= \int_{\R^n}   \widehat U_\eps(s,y)\left[ 1- \eps^{\beta} s \int_0^\infty  e^{-D(a)  -\eps^{\beta} sa }da \right] \omega_\eps (x,y) dy + \eps^\beta\text{ID}^{(2)}_\eps(s,x)  , \label{Uepsspace} 
\end{align}
where the initial data term is given by 
\begin{align} 
 \text{ID}^{(2)}_\eps(s,x)&
=  \int_0^\infty \!\! \int_{\R^n} d(a) e^{-D(a)  -\eps^{\beta} sa }   \int_0^a u^0(a',y) e^{D(a')+ \eps^{\beta} sa' }  da' \; \omega_\eps (x,y)dady \notag
 \\
&=  \int_{a'=0}^\infty \int_{\R^n} u^0(a',y) e^{D(a') + \eps^\beta sa' } \omega_\eps (x,y)
 \int_{a'}^\infty d(a)e^{-D(a)  -\eps^{\beta} sa }     da da'dy \notag
\\
&=   \int_{a'=0}^\infty \int_{\R^n} u^0(a',y) \omega_\eps (x,y)
\big[1- \eps^\beta s 
 \int_{a'}^\infty e^{-D(a) +D(a') -\eps^{\beta} s(a-a') }  \big]   da da'dy.\label{IDeps}
\end{align}

Our next step is to eliminate $\widehat U_\eps(s,x)$ from the above two formulas  \eqref{eq1:rho} and \eqref{Uepsspace}.  Multiplying both sides of \eqref{eq1:rho} by $\int_0^\infty e^{-D(a)  -\eps^{\beta} sa } da$, we obtain, 
\begin{align*} 
\widehat \rho_\eps(s,x)& -\eps^\beta \text{ID}^{(1)}_\eps(s,x)  = \widehat U_\eps(s,x) \int_0^\infty e^{-D(a)  -\eps^{\beta} sa } da
\\
&= \int_{\R^d} \big[\rho_\eps(s,y)- \eps^\beta \text{ID}^{(1)}_\eps(s,y)\big] \big[1 -\eps^{\beta} s \int_0^\infty  e^{-D(a)  -\eps^{\beta} sa }da \big] \;   \omega_\eps (x,y) dy \\
& \qquad +\eps^\beta\text{ID}^{(2)}_\eps(s,x)\int_0^\infty e^{-D(a)  -\eps^{\beta} sa } da.
\end{align*}
%
which we may rewrite as 
\begin{align}\label{rhoepsspace} 
\int_{\R^n} \big[\widehat \rho_\eps(s,x)-\widehat \rho_\eps(s,y)+ \eps^\beta s &\widehat \rho_\eps(s,y)  \int_0^\infty e^{-D(a) -\eps^{\beta} sa }da \big]  \omega_\eps (x,y)dy \nonumber
\\
&=\text{Q}^{(2)}_\eps(s,x) + \text{Q}^{(1)}_\eps(s,x).
\end{align}
with
\begin{equation}\label{defQ} 
\begin{cases}
\text{Q}^{(2)}_\eps(s,x) = \eps^\beta \text{ID}^{(2)}_\eps(s,x) \int_0^\infty e^{-D(a) -\eps^{\beta} sa }da,
\\[5pt]
\text{Q}^{(1)}_\eps(s,x) =\eps^\beta \int_{\R^d}\left[ \text{ID}^{(1)}_\eps(s,x)-\text{ID}^{(1)}_\eps(s,y)\big[1 -\eps^{\beta} s \int_0^\infty  e^{-D(a)  -\eps^{\beta} sa }da \big] \right]   \omega_\eps (x,y)  dy.
\end{cases} \end{equation}

We now analyze the different terms arising in the identity~\eqref{rhoepsspace}, focusing on the case when $d(a)$ is given by Eq.~\eqref{as:hom}. We begin with an observation.

\paragraph{General calculations.}
We first recall that 
\begin{align}\label{INT1} 
\eps^\beta \int_0^\infty e^{-D(a) -\eps^{\beta} sa }da& = \eps^{ \beta} \int_0^\infty  \frac{e^{-\eps^{\beta} sa }}{(1+a)^\alpha} da =\eps^{\alpha \beta} \int_0^\infty \frac{e^{-st}}{(\eps^{{\beta}} + t)^\al}dt \notag\\
&= \eps^{\alpha \beta}  \int_0^\infty \frac{e^{-st}}{ t^\al}dt  \; [1+O(\eps^\beta) ] =  \eps^{\alpha \beta} {\cal L}_s (t^{-\alpha} )\; [1+O(\eps^\beta) ].
\end{align}

In a similar way, with the change of variable $a \to t= \eps^\beta (a-a')$
\begin{align}\label{INT2} 
\eps^\beta \int_{a'}^\infty &e^{D(a')-D(a) -\eps^{\beta} s(a-a') }da = \eps^{\al\beta} \int_0^\infty \frac{(1+a')^\alpha}{\big(t+(1+a')\eps^\beta \big)^\alpha}e^{-st}dt \notag
\\
&= \eps^{\alpha\beta} \int_0^\infty \f{(1+a')^\al}{t^\al}e^{-st}dt \; [1 +O(\eps^{ \beta})]
= \eps^{\alpha\beta} (1+a')^\al {\cal L}_s (t^{-\alpha} )\; [1 +O(\eps^{\beta})].
\end{align}

\paragraph{The terms for $\widehat\rho_\eps$.}

We begin with {\em the first two terms on the left hand side} (LHS) of \eqref{rhoepsspace}. With assumptions~\eqref{as:Subomega}, in the weak sense, by testing  against a smooth test function with compact support $\psi(x)$, they give 
\begin{align*} 
 \int_{\R^{2n}} \psi(x) & [ \widehat \rho_\eps(s,y) - \widehat \rho_\eps(s,x)] \omega_\eps (x,y)dy dx = 
\int_{\R^{2n}} \widehat \rho_\eps(s,x) [ \psi(y) -\psi(x)]  \omega_\eps (x,y)dy dx
\\
&= {\eps^2} \int_{\R^{2n}} \widehat \rho_\eps(s,x) [\p^2_{i,j} \psi(x) (y_i-x_i)(y_j-x_j) +O(|x-y|^3) ]\omega_\eps (x,y)dy dx
\\
&={\eps^2}  \int_{\R^{n}} \widehat \rho_\eps(s,x) [A(x):\nabla\nabla \psi(x)+O(\eps)] dx .\end{align*}

The {\em third term on the (LHS) of \eqref{rhoepsspace}},  can be treated with the same argument and yields
\begin{align} 
\eps^\beta s \int_{\R^n}   &\widehat \rho_\eps(s,y)  \int_0^\infty e^{-D(a) -\eps^{\beta} sa }da \omega_\eps (x,y)dy= \eps^\beta s \widehat \rho_\eps(s,x)\int_0^\infty e^{-D(a) -\eps^{\beta} sa }da [1 +O(\eps^2)]
\label{lhsrho}
\end{align}

Following the calculation for \eqref{FundLaplace2}, when  $d(a) = \frac{\alpha}{1+a}$, $0<\alpha <1$, 
the RHS of \eqref{lhsrho} can be written, with $\tau =\eps^{\beta} a $ and according to~\eqref{INT1} 
\begin{align*} 
\eps^{ \beta} s   \int_0^\infty \rho_\eps (x,t) e^{-st}dt\int_0^\infty  \frac{e^{-\eps^{\beta} sa }}{(1+a)^\alpha} da
=&s\int_0^\infty \rho_\eps e^{-st}dt\int_0^\infty \frac{e^{-s\tau}}{(1+\eps^{-{\beta}} \tau)^\al}d\tau\nonumber\\
=&\eps^{\al\beta}s\int_0^\infty \rho_\eps e^{-st}\int_t^\infty\frac{e^{-s(\tau-t)}}{(\eps^\beta+\tau-t)^\al}d\tau dt\nonumber\\
=&\eps^{\alpha\beta}s\int_0^\infty e^{-s\tau}\int_0^\tau \frac{\rho_\eps(t)}{(\eps^\beta+\tau-t)^\al}dt d\tau\nonumber\\
=&\eps^{\al\beta}\int_0^\infty e^{-st}\partial_t\int_0^t\frac{\rho_\eps(\tau)}{(\eps^\beta+t-\tau)^\al } d\tau dt.
\end{align*}
Here the last equality is due to integration by parts.

As a conclusion, we have obtained that, in the weak sense, the contribution of the left hand side of\eqref{rhoepsspace} is given by 
\begin{align} \label{LHS_final}
LHS= -\eps^2 \nabla \nabla [A(x) \widehat \rho_\eps (s,x)]  \; [1+ O(\eps)] + \eps^{\al\beta}\int_0^\infty e^{-st}\partial_t\int_0^t\frac{\rho_\eps(\tau)}{(t-\tau)^\al } d\tau dt [1+O(\eps^{\beta})].
\end{align}
At this stage, one can already observe that terms of orders  $\eps^2$ and $\eps^{\al\beta}$ should be compatible and thus the choice $2=\alpha \beta$.

\paragraph{The term $\text{Q}^{(1)}_\eps(s,x)$.}
 
\commentout{
Keeping in mind the expression \eqref{rhoepsspace}, we have to evaluate
$\eps^\beta Q^{(1)}_\eps$ where
\begin{align*}
& Q^{(1)}_\eps(s,x):= \text{ID}^{(1)}_\eps(s,x)- \int_0^\infty \!\!\int_{\R^d} d(a) e^{-D(a)  -\eps^{\beta} sa } \text{ID}^{(1)}_\eps(s,y) \omega_\eps (x,y)dady,
\end{align*}
which we write as
\begin{align*}
 Q^{(1)}_\eps=& \text{ID}^{(1)}_\eps(s,x)- \int_0^\infty \!\!\int_{\R^d} d(a) e^{-D(a)  -\eps^{\beta} sa } \big(\text{ID}^{(1)}_\eps(s,y)-\text{ID}^{(1)}_\eps(s,x)\big) \omega_\eps (x,y)dady\nonumber\\
&-\int_0^\infty(d(a)+\eps^\beta s) e^{-D(a)  -\eps^{\beta} sa }da \text{ID}^{(1)}_\eps(s,x)+\eps^\beta s\int^\infty_0 e^{-D(a)  -\eps^{\beta} sa }da \text{ID}^{(1)}_\eps(s,x). \nonumber
\end{align*}
The first and third terms cancel and we can write
\begin{align*}
Q^{(1)}_\eps=&\eps^\beta s\int_0^\infty e^{-D(a)-\eps^\beta sa}da \text{ID}^{(1)}_\eps(s,x)+\eps^2 \int_{\R^d}  \big(\text{ID}^{(1)}_\eps(s,y)-\text{ID}^{(1)}_\eps(s,x)\big) \f{\omega_\eps (x,y)}{\eps^2}dy\nonumber\\
&+\eps^{\beta+2} s\int_0^\infty \!\!\int_{\R^d} e^{-D(a)  -\eps^{\beta} sa } \big(\text{ID}^{(1)}_\eps(s,y)-\text{ID}^{(1)}_\eps(s,x)\big) \f{\omega_\eps (x,y)}{\eps^2}dady\nonumber
\\
=&\eps^{\alpha\beta}s^\alpha \Gamma_0\text{ID}^{(1)}_\eps(s,x)+\eps^2 \int_{\R^d}  \big(\text{ID}^{(1)}_\eps(s,y)-\text{ID}^{(1)}_\eps(s,x)\big) \f{\omega_\eps (x,y)}{\eps^2}dy+O(\text{HOT}).
\end{align*}
Thanks to \eqref{eq:ID1}, the first term on the RHS can be further reformulated as 
}
According to the analysis for $\rho_\eps$, the jump kernel $\omega_\eps(x,y)$ produces an $O(\eps^2)$ compared to identity. Therefore, still in the weak sense, we have

\begin{align*}
\text{Q}^{(1)}_\eps(s,x)&= \eps^\beta\left[ \text{ID}^{(1)}_\eps(s,x)-\text{ID}^{(1)}_\eps(s,x) \; [1+O(\eps^2)] \; \big[1 -\eps^{\beta} s \int_0^\infty  e^{-D(a)  -\eps^{\beta} sa }da \big]\right]
\\
&= O(\eps^{2+\beta}) \; \text{ID}^{(1)}_\eps(s,x) \;  [1+O(\eps^{\alpha \beta})]
\end{align*}
according to \eqref{INT1}. The term $\text{ID}^{(1)}_\eps$ can be also evaluated as
\begin{align*}
\eps^\beta \text{ID}^{(1)}_\eps(s,x)&= \eps^\beta \int_0^\infty \!\!\int_0^\infty u^0(a',x)\frac{(1+a')^\alpha}{(t+1+a')^\alpha}e^{-\eps^\beta st}dtda' =O(\eps^{\alpha\beta})
\end{align*}
according to \eqref{INT2}.

As a consequence, the contribution of the term $\text{Q}^{(1)}_\eps(s,x)$, vanishes compared to those arising in~\eqref{LHS_final}.

\commentout{Finally, thanks to \eqref{FundLaplace}, we find
\begin{align} \label{ID1Final}
s^\alpha \Gamma_0 \text{ID}^{(1)}_\eps(s,x)
=\eps^{\al\beta-\beta}\int_0^\infty\partial_t\Big(\int_0^t\frac{\int_0^\infty u^0(a',x)\f{(1+a')^\al}{(t+\eps^\beta(1+a'))^\al}da'}{(t-\tau)^\al}d\tau\Big) e^{-st}dt.
\end{align}

Finally, we get
\begin{align} \label{ID1total}
\eps^\beta Q^{(1)}_\eps=\eps^{\al\beta-\beta}\int_0^\infty\partial_t\Big(\int_0^t\frac{\int_0^\infty u^0(a',x)\f{(1+a')^\al}{t^\al}da'}{(t-\tau)^\al}d\tau\Big) e^{-st}dt + O(HOT)
\end{align}
}

\paragraph{The term $\text{Q}^{(2)}_\eps(s,x)$.}
To analyze the limit of Eq.~\eqref{rhoepsspace}, we finally need to analyze the term $\text{Q}^{(2)}_\eps(s,x)$ in~\eqref{defQ}.

Using \eqref{INT1}, we have
\[
\text{Q}^{(2)}_\eps(s,x) = e^{\alpha \beta} \int_0^\infty \frac{e^{ -st}}{t^\alpha}dt [1+O(\eps^\beta)] \; \text{ID}^{(2)}_\eps(s,x).
\]
Also, according to the analysis for $\rho_\eps$, in the definition \eqref{IDeps} of $\text{ID}^{(2)}_\eps$, the jump kernel $\omega_\eps(x,y)$ produces an $O(\eps^2)$ compared to identity. Therefore, we also have
\begin{align*}
\text{ID}^{(2)}_\eps(s,x)& = \int_{a'=0}^\infty u^0(a',x) \; [1 +O(\eps^2)] 
\big[1- \eps^\beta s \int_{a'}^\infty e^{-D(a) +D(a') -\eps^{\beta} s(a-a') }  \big]   da da'
 \\
 &= \ 
 \int_{0}^\infty u^0(a,x) (1+a)^\al [1 +O(\eps^2)][1 +O(\eps^{\alpha \beta})] da,
\end{align*}
where we have used the formula \eqref{INT2}.

We conclude that
\begin{align}\label{ID2_final}
\text{Q}^{(2)}_\eps(s,x)& = e^{\alpha \beta} \int_0^\infty \frac{e^{ -st}}{t^\alpha}dt
\int_{0}^\infty u^0(a,x) (1+a)^\al da \; [1 +O(\eps^2+\eps^{\alpha \beta})].
\end{align}

\commentout{
Similar calculations as for \eqref{LT:decayderivative} yield
\begin{align*}
&\text{ID}^{(2)}_\eps(s,x) dx
\\
=&\eps^\beta \int_{a'=0}^\infty  \int_{\R^d} u^0(a',y) e^{D(a') + \eps^\beta sa' } \omega_\eps (x,y)\Big[-e^{-D(a)-\eps^\beta sa}\Big|_{a=a'}^{\infty}-\eps^\beta s \int_{a'}^\infty e^{-D(a)-\eps^\beta s a}da\Big]da'dy 
\\
=&\eps^\beta \int_{\R^d}\int_{a'=0}^\infty   u^0(a',y) \omega_\eps (x,y)\Big[1-\eps^\beta s \int_{a'}^\infty e^{-D(a)+D(a')-\eps^\beta s (a-a')}da\Big]da'dy.
\end{align*}
Therefore, testing against a smooth test function $\psi$, we find
\begin{align*}
\int_{\R^{d}} \psi(x) &\text{ID}^{(2)}_\eps(s,x) dx=\eps^\beta \int_{a'=0}^\infty \Big[1-\eps^\beta s \int_{a'}^\infty e^{-D(a)+D(a')-\eps^\beta s (a-a')}da\Big]\Big[\int_{\R^d}\psi(x) u^0(a',x) dx
\\
& \qquad \qquad \qquad +\int_{\R^d}\psi(x)\int_{\R^{2d}}  \big( u^0(a',y)- u^0(a',x)\big)\omega_\eps (x,y)dy dx  \Big] da'
\\
=&\eps^\beta \int_{a'=0}^\infty \Big[1-\eps^\beta s \int_{a'}^\infty e^{-D(a)+D(a')-\eps^\beta s (a-a')}da\Big] \Big[\int_{\R^d}\psi(x) u^0(a',x) dx da'+O(\eps^2)\Big].
\end{align*}
Further calculations for the initial condition yield, setting $a=a'+b$ and then $\tau =\eps^{\beta}b$,
\begin{align*}
&\int_{\R^{d}} \psi(x)  \text{ID}^{(2)}_\eps(s,x) dx
\\
=&\eps^\beta\int_{a'=0}^\infty\Big[1-\eps^\beta s \int_{0}^\infty \frac{(1+a')^\al}{(1+b+a')^\al}e^{-\eps^\beta sb}db\Big]\Big[ \int_{\R^d}\psi(x) u^0(a',x) dx da'+O(\eps^{2})\Big]
\\
=&\eps^\beta\int_{a'=0}^\infty\Big[1-\eps^{\al\beta} s \int_{0}^\infty \frac{(1+a')^\al}{(\eps^\beta(1+a')+\tau)^\al}e^{- s\tau}d\tau\Big]\Big[\int_{\R^d}\psi(x) u^0(a',x) dx da'+O(\eps^{2})\Big].
\end{align*}
This means
\begin{align*}
&\int_{\R^{d}} \psi(x)  \text{ID}^{(2)}_\eps(s,x) dx
=\eps^\beta \int_{0}^\infty  \int_{\R^d}\psi(x) u^0(a,x) dx da+O(\eps^{\alpha \beta}+\eps^2)
\end{align*}
Then, using \eqref{salphat}, one has
$$
\int_0^\infty \tau^{-\alpha}e^{-s\tau}d\tau=s^{\alpha-1}\int_0^\infty\sigma^{-\alpha}e^{-\sigma}d\sigma \blue{= \Gamma_0 s^{\alpha-1} }
$$
and thus, at leading order, we conclude that
\begin{align} \label{ID1final}
\Gamma_0\eps^{\alpha\beta-\beta}s^{\alpha-1}\int_{\R^d}\psi(x)\text{ID}^{(2)}_\eps(s,x)dx&
=\cancel{\Gamma_0}\eps^{\al\beta}\int_0^\infty t^{-\al}e^{-st}\int_0^{\infty}\int_{\R^d}\psi(x)u^0(a,x)dxda dt.
\end{align}
}
\paragraph{Conclusion (sub-diffusion case).}
Back to Eq.~\eqref{rhoepsspace}, we choose $\alpha \beta=2$.  Putting together \eqref{LHS_final} and \eqref{ID2_final},  we find
\begin{align*}\
\int_0^\infty e^{-st}\partial_t \int_0^t & \frac{\rho_\eps(\tau,x)}{(
t-\tau)^\al }d\tau dt  \nonumber\\
=& \nabla \nabla [A(x) \widehat \rho_\eps(s,x)]+
\int_0^\infty t^{-\al}e^{-st}\int_0^{\infty} u^0(a,x) da dt +O(\eps).
\end{align*}
This  is the Laplace transform of the equation
\begin{equation} \label{subdifins}
\partial_t\int_0^t\frac{\rho_\eps(\tau,x)}{(t-\tau)^\al } d\tau  =  \nabla \nabla \big(A(x)  \rho_\eps (s,x)\big) 
+ t^{-\al}\int_0^{\infty}u^0(a,x)da +O(\eps).
\end{equation}

As $\eps$ vanishes, we find that the limit $\rho_0$ of $\rho_\eps$  (which exists by uniqueness), satisfies Eq.~\eqref{AbNormaDiff}.

\paragraph{The diffusion case.}  When $d(a):=D_0$, where $D_0$ is a positive constant, then  $D(a) = D_0 a$. We can compute explicitly
\[
\int_0^\infty  e^{-D_0a -\eps^{\beta} sa } da = \frac{ 1}{D_0+\eps^{\beta} s},
\qquad
\int_{a'}^\infty e^{-D(a)+D(a')-\eps^\beta s (a-a')}da = \frac{ 1}{D_0+\eps^{\beta} s},
\]
when $d(a)=\frac{\al}{1+a}$ with $\al>1$, similar formula hold with $e^{-D(a)}=\frac{1}{(1+a)^\al}$ and $D_0=1-\al$ in the above equalities, at leading order.
Then the expressions in~\eqref{rhoepsspace} are simpler and reads  at higher order
\[
\eps^2 \nabla \nabla \big( A(x) \widehat \rho_\eps(s,x)\big) +\eps^\beta  \frac{s}{D_0}   \widehat \rho_\eps(s,x)=  \text{Q}^{(1)}_\eps(s,x)
+\text{Q}^{(2)}_\eps(s,x),
\]
from which we conclude that the scale is now $\beta =2$.

The terms on the right hand side can be evaluated easily using their definitions in ~\eqref{defQ}. For $\text{Q}^{(1)}_\eps$, we find 
\[
\text{Q}^{(1)}_\eps(s,x)=\eps^\beta O(\eps^2) \;  \text{ID}^{(1)}_\eps(s,x)
\]
and, using the definition in \eqref{eq:ID1}, at leading order
\[
\text{ID}^{(1)}_\eps(s,x)=\frac{1}{D_0}  \int_0^\infty u^0(a',x) .
\]
Therefore the contribution of $\text{Q}^{(1)}_\eps(s,x)$ is again at smaller scale.

The term $\text{Q}^{(2)}_\eps$ yields
\[
\text{Q}^{(2)}_\eps(s,x)= \frac{\eps^\beta}{D_0} \text{ID}^{(2)}_\eps(s,x)= \frac{\eps^\beta}{D_0} \int_0^\infty u^0(a,x) da \; [1+O(\eps^\beta)].
\]

Finally, we find, in the weak sense
\[
\eps^2 \nabla \nabla \big( A(x) \widehat \rho_\eps(s,x)\big) +\eps^\beta  \frac{s}{D_0}   \widehat \rho_\eps(s,x)=  
\frac{\eps^\beta}{D_0} \int_0^\infty u^0(a,x) da  +O(\eps^{2\beta}),
\]
which is the Laplace transform of the equation
\[
\partial_t \rho_\eps(t,x) =D_0 \nabla \nabla  \big(A(x) \rho_\eps(t,x) \big) + \delta(t) \rho^0(x)+O(\eps^2)
\]
Here we recall our convention that $u(t,a,x)$ and $\rho(t,x)$ are extended by $0$ for $t<0$ so that the Dirac mass represents the initial data as a jump. Consequently, the quantity $\rho^0(x)=\int_0^\infty u^0(x,a) da$ represents the initial condition for $\rho_0(t,x)$.
\end{proof}

\subsection{Remarks on regularity}
\label{sec:tDecay}

\paragraph{Regularity of $\rho_0$ in the sub-diffusion case.} 
When $\al <1$, the only uniform bound available for $\rho_\eps$ is that $\rho_\eps \geq 0$ and $\int_{\R^d} \rho_\eps(t,x)dx =M$. Consequently the limit $\rho_0$ is a measure. However it is standard to obtain some Sobolev estimates in $L^2$ (see \cite{DongKim_2020} for further regularity). We consider the case when $A=Id$, the identity matrix, to simplify and explain now how to find the following $H^1$ bound for $\rho_0$:
\begin{align} \label{reg:v}
 \frac 12   \partial_t \int_0^t \!  \!  \int_{\R^d} \frac{v^2(\tau,x)}{(t-\tau)^\al} d\tau dx& +\int_{\R^d} |\nabla v |^2 dx
 +\frac{\al}{2} \int_0^t\! \!  \int_{\R^d} \frac{|v(\tau,x)-v(t,x)|^2}{(t-\tau)^{\al+1}} d\tau dx \notag+\int_{\R^d}\frac{v^2(t)}{2t^\alpha}dx
 \\
 &=-\int_{\R^d} \nabla v(t,x)\cdot\nabla \rho^0(x) dx,
\end{align}
where $v(t,x) =\rho_0(t,x)-\rho^0(x)$. 

Notice that $v$ satisfies the sub-diffusion equation
\[
  \partial_t \int_0^t \frac{v(\tau)}{(t-\tau)^\al} d\tau - \Delta v(t,x) = \Delta \rho^0(x).
\]
From Remark \ref{remark5},  $v(0,x)=0$. Then, \eqref{reg:v} is obtained by multiplying this equation by $v(t,x)$ and using integration by parts. The fractional time derivative term  follows from the following identity: for $v\in C^1([0,\infty))$ satisfying $v(0)=0$, one has
\begin{align} \label{Chainrule}
\frac 12 \partial_t \int_0^t \frac{v^2(\tau)}{(t-\tau)^\al} d\tau + \frac{v^2(t)}{2t^\al} + \frac \al 2 \int_0^t \frac{|v(\tau)-v(t)|^2}{(t-\tau)^{\al+1}} d\tau=
v(t)  \partial_t \int_0^t \frac{v(\tau)}{(t-\tau)^\al} d\tau.
\end{align}

To show this, on the one hand, we write 
\begin{align} \label{reg1}
\partial_t \int_0^t \frac{v(\tau)}{(t-\tau)^\al} d\tau &= \partial_t \int_0^t \frac{v(t-s)}{s^\al} ds=- \int_0^t  \frac{\partial_s [v(t-s)-v(t)]}{s^\al} ds \nonumber\\
&=-\frac{v(t-s)-v(t)}{s^\al}\Big|_{s=0}^{s=t}-\al\int_0^t\frac{v(t-s)-v(t)}{s^{\al+1}}ds\nonumber\\
&=\frac{v(t)}{t^\al}+\lim_{s\to 0}\frac{v(t-s)-v(t)}{s}s^{1-\al}-\alpha \int_0^t  \frac{ v(t-s)-v(t)}{s^{\al+1}} ds\nonumber\\
&=\frac{v(t)}{t^\al}-\alpha \int_0^t  \frac{ v(t-s)-v(t)}{s^{\al+1}} ds.
\end{align}

On the other hand, applying the result to $v^2(\tau)$ we write
\begin{align*}
\partial_t \int_0^t \frac{v(\tau)^2}{(t-\tau)^\al} d\tau &= \frac{v(t)^2}{t^\al}-\alpha \int_0^t  \frac{ v(t-s)^2-v(t)^2}{s^{\al+1}} ds
\\
&=\frac{v(t)^2}{t^\al} -\alpha \int_0^t  \frac{ |v(t-s)-v(t)|^2}{s^{\al+1}} ds - 2 v(t) \alpha \int_0^t  \frac{ v(t-s)-v(t)}{s^{\al+1}} ds.
\end{align*}
Thanks to \eqref{reg1}, we conclude
\begin{align*}
\partial_t \int_0^t \frac{v^2(\tau)}{(t-\tau)^\al} d\tau &=
\frac{v^2(t)}{t^\al} -\alpha \int_0^t  \frac{ |v(t-s)-v(t)|^2}{s^{\al+1}} ds + 2 v(t) \left[\partial_t \int_0^t \frac{v(\tau)}{(t-\tau)^\al} d\tau -\frac{v(t)}{t^\al} \right],
\end{align*}
which gives the announced result Eq.~\eqref{Chainrule}.

As in \cite{Allen2017}, notice that it also holds, for all convex function $\Phi$ such that $\Phi(0)=0$, 
\[
\partial_t \int_0^t \frac{\Phi(v(\tau))}{(t-\tau)^\al} d\tau \leq \Phi'(v(t))\partial_t \int_0^t \frac{v(\tau)}{(t-\tau)^\al} d\tau.
\]
It would be interesting to understand if a related estimate holds uniformly in $\eps$.
%

\paragraph{Time regularity.}

The time regularity for $U(t,x)$ can also be obtained with the following observation.
Let $z_\eps= \partial_t u_\eps$. Differentiating in time on both sides of Eq.~\eqref{eq:AgeSpace}, we find that $z_\eps$ satisfies the same equation. Therefore, we also obtain
\[
|\partial_t z_\eps (t,a,x)| \leq  \frac{C^1}{(1+a)^\alpha}
\]
under the assumption \(|\partial_t z_\eps (0,a,x)| \leq  \frac{C^1}{(1+a)^\alpha;}\) which, using Eq.~\eqref{eq:AgeSpace}, requires that
\begin{equation}
\partial_a u^0_\eps + d(a) u^0_\eps = O(\eps^\beta).
\end{equation} 
This 'near equilibrium' condition implies, after integration in $y$ and $a$ against the weight $\frac{\omega_\eps(x,y)}{1+a}$, that the renewal term  $U_\eps (t,x)$ satisfies
\[
|\partial_t U_\eps (t,x)| \leq \frac{C^1}{\alpha}.
\]

%
%
%


\begin{thebibliography}{10}

\bibitem{Allen2017}
{\sc M.~Allen, L.~Caffarelli, and A.~Vasseur}, {\em Porous medium flow with
  both a fractional potential pressure and fractional time derivative}, Chin.
  Ann. Math., 38 (2017), p.~45–82.

\bibitem{SIAP2015}
{\sc C.~N. Angstmann, I.~C. Donnelly, B.~I. Henry, T.~A.~M. Langlands, and
  P.~Straka}, {\em Generalized continuous time random walks, master equations,
  and fractional {F}okker--{P}lanck equations}, SIAM Journal on Applied
  Mathematics, 75 (2015), pp.~1445--1468.

\bibitem{PhysRevE.61.132}
{\sc E.~Barkai, R.~Metzler, and J.~Klafter}, {\em From continuous time random
  walks to the fractional {F}okker-{P}lanck equation}, Phys. Rev. E, 61 (2000),
  pp.~132--138.

\bibitem{BerryLG2016}
{\sc H.~Berry, T.~Lepoutre, and A.~M. Gonz\'{a}lez}, {\em Quantitative
  convergence towards a self-similar profile in an age-structured renewal
  equation for subdiffusion}, Acta Appl. Math., 145 (2016), pp.~15--45.

\bibitem{CagGolse_2010}
{\sc E.~Caglioti and F.~Golse}, {\em On the {Boltzmann}-{Grad} limit for the
  two-dimensional periodic {Lorentz} gas}, J. Stat. Phys., 141 (2010),
  pp.~264--317.

\bibitem{CGA_2019}
{\sc V.~Calvez, P.~Gabriel, and {\'A}.~M. Gonz{\'a}lez}, {\em Limiting
  {Hamilton}-{Jacobi} equation for the large scale asymptotics of a
  subdiffusion jump-renewal equation}, Asymptotic Anal., 115 (2019),
  pp.~63--94.

\bibitem{Caputo1999}
{\sc M.~Caputo}, {\em Diffusion of fluids in porous media with memory},
  Geothermics, 28 (1999), p.~113–130.

\bibitem{DiH}
{\sc O.~Diekmann and J.~A.~P. Heesterbeek}, {\em Mathematical epidemiology of
  infectious diseases}, Wiley Series in Mathematical and Computational Biology,
  John Wiley \& Sons, Ltd., Chichester, 2000.
\newblock Model building, analysis and interpretation.

\bibitem{DongKim_2020}
{\sc H.~Dong and D.~Kim}, {\em {{\(L_p\)}}-estimates for time fractional
  parabolic equations in divergence form with measurable coefficients}, J.
  Funct. Anal., 278 (2020), p.~66.
\newblock Id/No 108338.

\bibitem{DGMS_book}
{\sc C.~D\"{u}ll, P.~Gwiazda, A.~Marciniak-Czochra, and J.~Skrzeczkowski}, {\em
  Spaces of measures and their applications to structured population models},
  vol.~36 of Cambridge Monographs on Applied and Computational Mathematics,
  Cambridge University Press, Cambridge, 2022.

\bibitem{EstradaPainter_2019}
{\sc G.~Estrada-Rodriguez, H.~Gimperlein, K.~J. Painter, and J.~Stocek}, {\em
  Space-time fractional diffusion in cell movement models with delay}, Math.
  Models Methods Appl. Sci., 29 (2019), pp.~65--88.

\bibitem{GoudonFrank_2010}
{\sc M.~Frank and T.~Goudon}, {\em On a generalized {Boltzmann} equation for
  non-classical particle transport}, Kinet. Relat. Models, 3 (2010),
  pp.~395--407.

\bibitem{FrankSun_2018}
{\sc M.~Frank and W.~Sun}, {\em Fractional diffusion limits of non-classical
  transport equations}, Kinet. Relat. Models, 11 (2018), pp.~1503--1526.

\bibitem{FEB_2024}
{\sc M.~Fritz, E.~S{\"u}li, and B.~Wohlmuth}, {\em Analysis of a dilute polymer
  model with a time-fractional derivative}, SIAM J. Math. Anal., 56 (2024),
  pp.~2063--2089.

\bibitem{IannelliBook}
{\sc M.~Iannelli and F.~Milner}, {\em The basic approach to age-structured
  population dynamics}, Lecture Notes on Mathematical Modelling in the Life
  Sciences, Springer, Dordrecht, 2017.
\newblock Models, methods and numerics.

\bibitem{sub7}
{\sc J.-H. Jeon, V.~Tejedor, S.~Burov, E.~Barkai, C.~Selhuber-Unkel,
  K.~Berg-S{\o}rensen, L.~Oddershede, and R.~Metzler}, {\em In vivo anomalous
  diffusion and weak ergodicity breaking of lipid granules}, Physical review
  letters, 106 (2011), p.~048103.

\bibitem{MS_2010}
{\sc J.~Marklof and A.~Str{\"o}mbergsson}, {\em The distribution of free path
  lengths in the periodic {Lorentz} gas and related lattice point problems},
  Ann. Math. (2), 172 (2010), pp.~1949--2033.

\bibitem{Meroz2015}
{\sc Y.~Meroz and I.~M. Sokolov}, {\em A toolbox for determining subdiffusive
  mechanisms}, Physics Reports, 573 (2015), pp.~1--29.

\bibitem{METZLER20001}
{\sc R.~Metzler and J.~Klafter}, {\em The random walk's guide to anomalous
  diffusion: a fractional dynamics approach}, Physics Reports, 339 (2000),
  pp.~1--77.

\bibitem{PerthameTEB}
{\sc B.~Perthame}, {\em Transport equations in biology}, Frontiers in
  Mathematics, Birkh\"{a}user Verlag, Basel, 2007.

\bibitem{sub8}
{\sc S.~A. Tabei, S.~Burov, H.~Y. Kim, A.~Kuznetsov, T.~Huynh, J.~Jureller,
  L.~H. Philipson, A.~R. Dinner, and N.~F. Scherer}, {\em Intracellular
  transport of insulin granules is a subordinated random walk}, Proceedings of
  the National Academy of Sciences, 110 (2013), pp.~4911--4916.

\bibitem{Tan2018}
{\sc P.~Tan, Y.~Liang, Q.~Xu, E.~Mamontov, J.~Li, X.~Xing, and L.~Hong}, {\em
  Gradual crossover from subdiffusion to normal diffusion: A many-body effect
  in protein surface water}, Phys. Rev. Lett.,  (2018), p.~248101.

\bibitem{Li2022}
\leavevmode\vrule height 2pt depth -1.6pt width 23pt, {\em Many-body effect
  renders universal subdiffusion to water on different proteins}, Phys. Rev.
  Research,  (2022), p.~L022003.

\bibitem{thieme}
{\sc H.~R. Thieme}, {\em Mathematics in population biology}, Princeton Series
  in Theoretical and Computational Biology, Princeton University Press,
  Princeton, NJ, 2003.

\bibitem{sub4}
{\sc I.~M. Toli{\'c}-N{\o}rrelykke, E.-L. Munteanu, G.~Thon, L.~Oddershede, and
  K.~Berg-S{\o}rensen}, {\em Anomalous diffusion in living yeast cells},
  Physical review letters, 93 (2004), p.~078102.

\bibitem{sub3}
{\sc M.~Weiss, M.~Elsner, F.~Kartberg, and T.~Nilsson}, {\em Anomalous
  subdiffusion is a measure for cytoplasmic crowding in living cells},
  Biophysical journal, 87 (2004), pp.~3518--3524.

\bibitem{zacher_2012}
{\sc R.~Zacher}, {\em Global strong solvability of a quasilinear subdiffusion
  problem}, J. Evol. Equ., 12 (2012), pp.~813--831.

\end{thebibliography}

\end{document}